\theoremstyle:=definition,remark,plain\do{%
        \expandafter\g@addto@macro\csname th@\theoremstyle\endcsname{%
            \addtolength\thm@preskip\parskip
            }%
        }
\newtheorem{theorem}{Theorem}[section]
\newtheorem*{theorem*}{Theorem}
\newtheorem{lemma}[theorem]{Lemma}
\theoremstyle{definition}
\newtheorem*{remark*}{Remark}
\newcommand{\conv}{\mbox{conv}}
\begin{document} 

\title{Radon numbers and the fractional Helly theorem}  

\author{Andreas F. Holmsen \and Dong-Gyu Lee}

\date{\today}

\address{\linebreak Andreas F. Holmsen 
\hfill \hfill \linebreak 
Department of Mathematical Sciences  
\hfill \hfill \linebreak
KAIST, 
Daejeon, South Korea.  \hfill \hfill }
\email{andreash@kaist.edu}

\address{\linebreak Dong-Gyu Lee 
\hfill \hfill \linebreak 
Department of Mathematical Sciences  
\hfill \hfill \linebreak
KAIST, 
Daejeon, South Korea.  \hfill \hfill }
\email{ldg2101@kaist.ac.kr}

\begin{abstract} 
A basic measure of the combinatorial complexity of a convexity space is its Radon number. In this paper we show a fractional Helly theorem for convexity spaces with a bounded Radon number, answering a question of Kalai. As a consequence we also get a weak $\varepsilon$-net theorem for convexity spaces with a bounded Radon number. This answers a question of Bukh and extends a recent result of Moran and Yehudayoff.
\end{abstract}

\maketitle

\section{Introduction} 

One of the fundamental statements of combinatorial convexity is Radon's lemma \cite{radon} which says that any set of $d+2$ points in $\mathbb{R}^d$ can be partitioned into two parts whose convex hulls intersect. This property was extended to partitions into $k$ parts, by the celebrated theorem of Tverberg \cite{tverberg}, stating that any set of $(d+1)(k-1)+1$ points in $\mathbb{R}^d$ can be partitioned into $k$ parts whose convex hulls share a common point. There are numerous generalizations, variations, and extensions of these types of results and we refer the reader to the surveys \cite{bara-sob, blagojevic, eckhoff-surv} for more information and further references. 

Radon introduced his lemma in order to prove one of the other fundamental theorems of convexity, namely Helly's theorem \cite{helly}, which states that if the intersection of a finite family of convex sets is empty, then there are  some $d+1$ or fewer sets in the family whose intersection is empty. A far reaching generalization of Helly's theorem is the famous $(p,q)$ theorem due to Alon and Kleitman \cite{pq-alon}, whose proof combined a large number of sophisticated tools and results that had been developed over the years since Helly's original theorem. For more information on the great number of extensions and generalizations of Helly's theorem we refer the reader to \cite{ADLS, eckhoff-surv, eckhoff-pq} and the references therein.

Here we will be concerned with one particular (and important) generalization of Helly's theorem due to Katchalski and Liu \cite{katch-liu} known as the {\em fractional Helly theorem}. It states the following. Let $F$ be a family of $n\geq d+1$ convex sets in $\mathbb{R}^d$, and suppose the number of $(d+1)$-tuples of $F$ with non-empty intersection is at least $\alpha\binom{n}{d+1}$, for some constant $\alpha>0$. Then there are at least $\beta n$ members of $F$ whose intersection is non-empty, where $\beta>0$ is a constant which depends only on $\alpha$ and $d$. 

The fractional Helly theorem plays a crucial role in the proof of the $(p,q)$ theorem (one might even say {\em the} crucial role \cite{akmm}), and various fractional Helly theorems are known \cite{hyperplanes, lattice, eckhoff, kalai-upper}. It is also of considerable interest to understand what conditions can be imposed on a set system which guarantees that it admits the ``fractional Helly property'' (see e.g. \cite{akmm, mato-vc}).

A question in this direction, which we learned from Gil Kalai (personal communication; see also \cite[Problem 18]{kalai-problems}), is whether ``Radon implies fractional Helly''?  Although this may be a (purposefully) vague question, we now describe an axiomatic setting in which it can be made precise. 
\medskip

A {\em convexity space} is a pair $(X,\mathcal{C})$ where $X$ is a (non-empty) set and $\mathcal{C}$ is a family of subsets of $X$ the following properties:
\begin{itemize}
    \item $\emptyset, X\in \mathcal{C}$.
    \item $A,B\in \mathcal{C} \Rightarrow A\cap B\in \mathcal{C}$.
\end{itemize}

For instance, $(\mathbb{R}^d, \mathcal{C}^d)$, where $\mathcal{C}^d$ is the family of all convex sets in $\mathbb{R}^d$, is the standard (Euclidean) convexity. Another typical example is the {\em integer lattice convexity} $(\mathbb{Z}^d, L^d)$ where $L^d = \{\mathbb{Z}^d\cap C  :  C\in \mathcal{C}^d\}$. For an overview of the theory of convexity spaces we refer the reader to the book by van de Vel \cite{vandevel}.

For a general convexity space $(X, \mathcal{C})$ we refer to the members of $\mathcal{C}$ as {\em convex sets}, and in this paper we will make the additional assumption that $\mathcal{C}$ is {\em finite}, that is, we consider only finite convexity spaces. (This does not exclude the standard convexity in $\mathbb{R}^d$ from our results, but simply means that we restrict ourselves to finite families of standard convex sets in $\mathbb{R}^d$. This is not a severe restriction, and the reader should be able replace it by a suitable compactness assumption, but we will keep things finite to emphasize the combinatorial flavor of our results.) 

Given a convexity space $(X, \mathcal{C})$ and a subset $Y\subset X$ we define the {\em convex hull} of $Y$, denoted by $\conv(Y)$, to be the intersection of all the convex sets containing $Y$. This is the minimal convex set containing $Y$.

The main invariant of a convexity space that we will be concerned with is its {\em Radon number}. This is the smallest integer $r_2$ (if it exists) such that every subset $P\subset X$ with $|P|\geq r_2$ can be partitioned into two parts $P_1$ and $P_2$ such that $\conv(P_1)\cap \conv(P_2)\neq \emptyset$. For instance, Radon's lemma states that the Radon number of the standard convexity in $\mathbb{R}^d$ equals $d+2$. (We will only deal with convexity spaces in which $r_2\geq 3$, thereby excluding degenerate/trivial cases.)

\subsection*{Results}
Our main result is a fractional Helly theorem for general convexity spaces with bounded Radon number. This answers Kalai's question. 

\begin{theorem} \label{gen-frac-helly}
For every $r\geq 3$ and $\alpha\in (0,1)$ there exists an $m = m(r)$ and a $\beta=\beta(\alpha, r)\in (0,1)$ with the following property: 
Let $F$ be a family of $n\geq m$ convex sets in a convexity space  with Radon number at most $r$. If at least $\alpha\binom{n}{m}$ of the $m$-tuples of $F$ have non-empty intersection, then there are at least $\beta n$ members of $F$ whose intersection is non-empty.
\end{theorem}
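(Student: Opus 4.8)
\section*{Proof proposal}

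The plan is to reduce Theorem~\ref{gen-frac-helly} to a point‑depth statement and then exploit the Radon number through a Ramsey/pigeonhole‑type argument. It suffices to show that $F$ contains a point of $X$ lying in at least $\beta n$ of its members; equivalently, that the nerve $\mathcal{N}(F)=\{G\subseteq F: \bigcap_{C\in G}C\neq\emptyset\}$ has a face of size $\geq\beta n$. Two facts will be available essentially for free. First, Radon number $\leq r$ implies Helly number $\leq r-1$: arguing by induction on $|G|$, given $G$ in which every $(r-1)$ members intersect, choose $p_C\in\bigcap_{C'\in G\setminus\{C\}}C'$ for each $C\in G$; once $|G|\geq r$ a Radon partition of $\{p_C:C\in G\}$ produces, by the usual argument (a point in $\conv$ of one part lies in every set indexed by the other part, and conversely), a point of $\bigcap_{C\in G}C$. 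Consequently, for any $m\geq r-1$, a clique of size $\geq m$ in the hypergraph of good $m$‑tuples is automatically a subfamily with nonempty intersection. Second, with this in hand the theorem becomes equivalent to the assertion that, for a suitable $m=m(r)$, the $m$‑uniform hypergraph of good $m$‑tuples has the ``density implies large clique'' property. Since this is false for arbitrary hypergraphs, the Radon number must be used again, and far more substantially.

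Establishing this density‑implies‑clique property is the heart of the matter, and the step I expect to be the \emph{main obstacle}. The route I would take: suppose $F$ has $\geq\alpha\binom{n}{m}$ good $m$‑tuples but no intersecting subfamily of size $\beta n$, and choose $m=m(r)$ large enough (a Ramsey‑type quantity depending only on $r$) that one can extract, by averaging and pigeonhole over the good $m$‑tuples, a bounded‑size but ``homogeneous'' sub‑configuration of $F$ — a subfamily in which the intersection pattern of the convex sets is completely regular, i.e. the good subtuples are exactly those of one prescribed combinatorial type. The all‑good type yields a clique, contradicting the assumption via the Helly bound above. In each remaining type one selects a transversal of witnessing points — one point from each small nonempty intersection appearing in the pattern — obtaining a set of more than $r$ points in which, by the regularity of the pattern, no $2$‑partition has intersecting convex hulls; this contradicts Radon number $\leq r$. (Bookkeeping the finitely many types and checking that each one genuinely forces a Radon‑free point set of size $>r$ — as opposed to merely a set whose parts are separated by single members of $F$ — is where the real work lies, and is exactly where a bounded Helly number would not suffice but a bounded Radon number does.)

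Granting this step, the theorem follows immediately: $\geq\alpha\binom{n}{m}$ good $m$‑tuples force a clique of size $\geq\beta n$ in the good‑$m$‑tuple hypergraph, which by the Helly bound is a subfamily of $F$ with a common point; one reads off $m=m(r)$ from the Ramsey parameter and $\beta=\beta(\alpha,r)$ from the quantitative density bound (the hypothesis $n\geq m$ being needed only to make the statement nonvacuous). As a sanity check on the difficulty, I would note that the natural $\varepsilon$‑net‑flavoured alternative — mimicking Moran--Yehudayoff and trying to bound, in terms of $r$, the VC dimension of the auxiliary dual set system $\{\{C\in F:x\in C\}:x\in X\}$ so as to invoke Matou\v{s}ek's fractional Helly for bounded VC dimension — appears not to go through, since a fully populated Venn diagram of $k$ convex sets does not on its face obstruct Radon partitions; this is why I expect the direct Radon‑partition argument sketched above to be the workable one.
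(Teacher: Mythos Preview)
Your reduction is right: Levi's argument gives Helly number $\leq r-1$, so a clique of size $\beta n$ in the $(r-1)$-uniform intersection hypergraph $H$ is automatically an intersecting subfamily, and the whole theorem reduces to showing that $H$ satisfies a ``density implies linear clique'' property. But your sketch of that step has a genuine gap. A Ramsey/averaging extraction from the hypothesis $c_m(H)\geq\alpha\binom{n}{m}$ will only hand you a \emph{bounded-size} homogeneous configuration; if that configuration happens to be all-good you have a clique of size $O_r(1)$, which does not contradict the absence of a clique of size $\beta n$. The passage from a local forbidden pattern to a linear-size clique is a separate, purely combinatorial fact about hypergraphs, and it is not a consequence of Ramsey-type arguments alone.

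The paper decouples these two steps explicitly. First it proves a \emph{colorful Helly theorem} (Lemma~\ref{colorful}): if $F_1,\dots,F_m$ are families with every rainbow $m$-tuple intersecting, then some $F_i$ is intersecting. This is the geometric step, and its proof is not a $2$-partition argument as you suggest; it needs the $k$th partition (Tverberg-type) number with $k=r-1$, bounded via Jamison's inequality $r_k\leq r_2^{\lceil\log_2 k\rceil}$, and the value $m=S(n,k)$ arises by indexing the families by all $k$-partitions of $[n]$ with $n=r_k$. Translated to $H$, this says precisely that $H$ contains no \emph{complete $m$-tuple of missing edges} (pairwise disjoint non-edges every transversal of which is a clique). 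Second, the paper invokes a black-box hypergraph result (Lemma~\ref{dk-theorem}, from \cite{holmsen-arx}, generalizing Gy\'arf\'as--Hubenko--Solymosi): a $k$-uniform $H$ with $c_m(H)\geq\alpha\binom{n}{m}$ and no complete $m$-tuple of missing edges has $\omega(H)\geq\beta n$. It is this second lemma that does the work your Ramsey sketch was meant to do, and it is a nontrivial supersaturation-type statement rather than a pigeonhole argument. Your proposal would become correct if you (i) replaced the vague ``homogeneous configuration'' case analysis by the specific forbidden structure above, proving it via $k$-partitions rather than $2$-partitions, and (ii) supplied or cited the hypergraph lemma converting that forbidden structure plus density into a linear clique.
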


{\em Remark.} Note that  the integer $m$ depends only on $r$ and not on $\alpha$. Our bound on $m$ is quite large in terms of $r$ and is expressed as certain Stirling numbers of the second kind. Here is how we plan to prove Theorem \ref{gen-frac-helly}. First we establish a {\em colorful Helly theorem} for general convexity spaces (Lemma \ref{colorful}), and this is where the integer $m(r)$ appears as the number of colors needed. Next, we consider the ``intersection hypergraph'' carrying the information of which subfamilies of $F$ are intersecting. The colorful Helly theorem may then be interpreted as forbidding certain patterns from the intersection hypergraph (to made precise in section \ref{sec:fractional}), and we can then apply a recent result by the first author \cite{holmsen-arx} concerning the clique number of dense uniform hypergraphs with forbidden substructures. 

\medskip

Now we turn to an application of Theorem \ref{gen-frac-helly}. Recall that 
the {\em transversal number} of a set system $F$ over a set $X$, denoted by $\tau(F)$, is the minimum cardinality of a subset $T\subset X$ such that $T$ intersects every member of $F$. The {\em fractional transversal number} of $F$, denoted by $\tau^*(F)$, is the minimum of $\sum_{x\in X} f(x)$ over all functions $f:X\to [0,1]$ such that $\sum_{x\in S}f(x)\geq 1$ for every $S\in F$. Trivially, we have $\tau^*(F)\leq \tau(F)$, while in general there is no universal bound on $\tau(F)$ in terms of $\tau^*(F)$. Nevertheless, there are non-trivial classes of set systems for which such bounds {\em do} exist, such as hypergraphs with bounded VC-dimension (the $\varepsilon$-net theorem \cite{haussler}), families of convex sets in $\mathbb{R}^d$ (weak $\varepsilon$-nets for convex sets \cite{weak-nets}), and families of convex sets in {\em spearable} convexity spaces with bounded Radon number \cite{moran}.

Our second result shows that $\tau(F)$ can be bounded by a function of $\tau^*(F)$ when $F$ is a family of convex sets in a general convexity space with bounded Radon number. 

\begin{theorem}\label{weak-epsilon}
For every $r \geq 3$ there exists positive constants $c_1=c_1(r)$ and $c_2=c_2(r)$ with the following property:
For any family $F$ of convex sets in a convexity space with Radon number at most $r$, we have $\tau(F)\leq c_1(\tau^*(F))^{c_2}$.
\end{theorem}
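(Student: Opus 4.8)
The plan is to deduce Theorem~\ref{weak-epsilon} from Theorem~\ref{gen-frac-helly} along the lines of Alon and Kleitman's proof of the $(p,q)$-theorem, with the role that the VC-dimension--based $\varepsilon$-net theorem plays there taken over here by an iterated use of the fractional Helly theorem. The first step is a routine reformulation. If $\tau^*(F)=t$, then reading an optimal fractional transversal of $F$ as a measure on the ground set $X$ and rescaling it to a probability measure $\nu$, every member $S\in F$ satisfies $\nu(S)\ge 1/t$. So it suffices to prove the following weak $\varepsilon$-net statement: for every probability measure $\nu$ on $X$ and every $\varepsilon>0$ there is a set $N\subseteq X$ with $|N|\le c_1(1/\varepsilon)^{c_2}$ that meets every convex set of $\nu$-measure at least $\varepsilon$. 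Applying this with $\varepsilon=1/t$ to $F$ itself gives $\tau(F)\le c_1 t^{c_2}$, and it is also the weak $\varepsilon$-net theorem announced in the abstract.

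The bridge to Theorem~\ref{gen-frac-helly} is a double-counting observation: a finite family $G$ of convex sets each of $\nu$-measure at least $\varepsilon$ automatically has many intersecting $m$-tuples, where $m=m(r)$ is the integer in Theorem~\ref{gen-frac-helly}. Writing $d(x)=|\{S\in G:x\in S\}|$, we have $\mathbb{E}_{x\sim\nu}[d(x)]=\sum_{S\in G}\nu(S)\ge\varepsilon|G|$, hence by convexity $\mathbb{E}_{x\sim\nu}\bigl[\binom{d(x)}{m}\bigr]\ge\binom{\varepsilon|G|}{m}$; but the left-hand side equals $\sum_T\nu(\bigcap T)$, the sum over all $m$-subsets $T$ of $G$, and each summand is at most $1$ and is $0$ unless $\bigcap T\ne\emptyset$, so at least $\binom{\varepsilon|G|}{m}\ge(\varepsilon/2)^m\binom{|G|}{m}$ of the $m$-subsets of $G$ are intersecting as soon as $|G|$ exceeds a constant multiple of $m/\varepsilon$ (when $|G|$ is smaller than that, $\tau(G)\le|G|$ is already $O(m/\varepsilon)$). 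Theorem~\ref{gen-frac-helly} then yields a point lying in at least $\beta_\ast\cdot|G|$ members of $G$, where $\beta_\ast=\beta\bigl((\varepsilon/2)^m,r\bigr)>0$; since the hypothesis ``every member is $\varepsilon$-heavy'' is inherited by subfamilies, this is a \emph{hereditary} positive-fraction piercing property, and crucially the fraction $\beta_\ast$ depends only on $\varepsilon$ and $r$, not on the number of sets.

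What remains --- and this is where the real difficulty lies --- is to turn this piercing property into a transversal whose size depends only on $\varepsilon$ and $r$. The naive approach of repeatedly removing a point that pierces a $\beta_\ast$-fraction of the surviving sets gives a transversal of size $O(\beta_\ast^{-1}\log|F|)$, but the dependence on $|F|$ must be eliminated. I would do this with the linear-programming/re-weighting machinery of Alon and Kleitman: maintain weights on $F$, at each stage apply the piercing property to a re-weighted copy of the family and then increase the weights of the sets that remain unpierced, and bound the number of stages by a potential argument in terms of $\tau^*(F)$ rather than $|F|$. The main obstacle is that Theorem~\ref{gen-frac-helly} measures the \emph{fraction} of intersecting $m$-tuples and is thus sensitive to non-uniform weights; I would address this by clearing denominators and passing to the multiset family in which each set is repeated in proportion to its weight (Theorem~\ref{gen-frac-helly} applies verbatim to such families), and by arranging the re-weighting so that the ratio of largest to smallest weight stays bounded, which keeps a positive fraction of the $m$-multituples intersecting throughout. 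Making this bookkeeping quantitative, so that the number of points is a genuine polynomial in $\tau^*(F)$ with no dependence on the convexity space, is the technical heart of the proof.
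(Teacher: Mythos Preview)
Your first two steps are correct and are in fact the opening moves of the machinery in Alon--Kalai--Matou\v{s}ek--Meshulam \cite{akmm}: pass from $\tau^*$ to a probability measure, and use convexity of $\binom{\cdot}{m}$ to show that any family of $\varepsilon$-heavy convex sets has an $\alpha=(\varepsilon/2)^m$ fraction of intersecting $m$-tuples, whence Theorem~\ref{gen-frac-helly} gives a point in a $\beta_\ast$-fraction of them. So far so good.

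The third step, however, is where you acknowledge the real difficulty, and your sketch is not a proof. The re-weighting idea you describe (double the weight of unpierced sets, pass to a multiset, repeat) is the standard multiplicative-weights heuristic for set cover, and in general it yields only $O(\beta_\ast^{-1}\log|F|)$ rounds; the ``potential argument in terms of $\tau^*(F)$ rather than $|F|$'' that you allude to is precisely what needs to be supplied, and nothing in your outline explains how keeping the weight ratio bounded would achieve it. The actual argument in \cite{akmm} does not proceed by re-weighting $F$; it goes through a first-selection-type lemma and crucially requires the fractional Helly property not just for $F$ but for $F^\cap$, the closure of $F$ under intersections.

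This is exactly where the paper's proof is shorter and cleaner than your plan. The paper observes that in a convexity space one has $F^\cap\subset\mathcal{C}$ automatically (intersections of convex sets are convex, by axiom), so Theorem~\ref{gen-frac-helly} gives the property \textup{FH}$(m,\alpha,\beta)$ for $F^\cap$ itself, for every $\alpha>0$. That is precisely the hypothesis of \cite[Theorem~9]{akmm}, which then delivers $\tau(F)\le c_1(\tau^*(F))^{c_2}$ as a black box. In other words, you are attempting to re-derive \cite[Theorem~9]{akmm} from scratch, whereas the paper simply verifies its hypothesis in one line and invokes it. If you want a self-contained argument, you should follow the actual proof of that theorem (which uses $F^\cap$, not a re-weighting of $F$); otherwise, cite it.
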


{\em Remark.} An equivalent formulation of this result is in terms of weak $\varepsilon$-nets, and it follows that Theorem \ref{weak-epsilon} answers a question of Bukh \cite[Question 3]{bukh}. The weak $\varepsilon$-net theorem for standard convexity in $\mathbb{R}^d$ \cite{weak-nets} is another crucial tool used in Alon and Kleitman's proof of the $(p,q)$ theorem, and it was later shown by Alon, Kalai, Matou{\v s}ek, and Meshulam \cite{akmm} that for abstract set systems, a suitable fractional Helly property will give the type of weak $\varepsilon$-net needed to prove the $(p,q)$ theorem. From this point of view, Theorem \ref{weak-epsilon} is a straight-forward consequence of Theorem \ref{gen-frac-helly} and the work done in \cite{akmm}. The details of this discussion will be given in section \ref{sec:transversals}.

\subsection*{Outline of paper} 

In section \ref{sec:colorful} we establish a colorful Helly theorem for convexity spaces with bounded Radon number (Lemma \ref{colorful}), and use this to prove Theorem \ref{gen-frac-helly} in section \ref{sec:fractional}. In section \ref{sec:transversals} we discuss weak $\varepsilon$-nets and review the main results and concepts from \cite{akmm} needed to prove Theorem \ref{weak-epsilon}.

\subsection*{Notation} We use the following standard notation and terminology. For a natural number $n$, the set $\{1,\dots, n\}$ is denoted by $[n]$, and for a finite set $X$, the set of $k$-tuples ($k$ element subset) of $X$ is denoted by  $\binom{X}{k}$. A {$k$-partition} of  $X$ is a partition of the set $X$ into $k$ non-empty unlabeled parts. The number of $k$-partitions of $[n]$ is denoted by $S(n,k)$. (The numbers $S(n,k)$ are commonly referred to as {\em Stirling numbers of the second kind} \cite[section 1.9]{stanley}.)

\section{A colorful Helly theorem} \label{sec:colorful}

The Radon number of a convexity space can be generalized as follows. For an integer $k\geq 2$, the {\em $k$th partition number} of a convexity space $(X, \mathcal{C})$, denoted by $r_k$, is the smallest integer (if it exists) such that for any multiset $Y\subset X$ with cardinality $r_k$ (counting multiplicities), there exists a $k$-partition of $Y$ into parts $Y_1, \dots, Y_k$ such that $\conv(Y_1)\cap \cdots \cap \conv(Y_k)\neq \emptyset$. Observe that for $k=2$ this indeed coincides with our definition of the Radon number. 

In the case when the ground set $X$ is finite and $k>|X|$ we adopt the convention that $r_k = |X|+1$. In the literature the $k$th partition number is sometimes referred to as the $k$th Radon number or the $k$th Tverberg number, but we will only use the term Radon number when referring to $r_2$. In general, we have the following bound on the $k$th partition number of a convexity space.

\begin{lemma}[Jamison \cite{jamison}] \label{jamsman}
For any integer $k>2$ and convexity space with  bounded Radon number we have, we have $r_k\leq r_2^{\lceil \log_2 k\rceil}$.
\end{lemma}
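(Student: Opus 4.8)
The plan is to prove the bound by repeated ``doubling'': showing that if a $k$-partition always exists for multisets of a certain size, then a $2k$-partition always exists for multisets of twice that size. More precisely, I would first establish the key inequality $r_{2k} \le 2\, r_k$ for all $k \ge 1$ (with the convention $r_1 = 1$, so that the base case $r_2 \le 2 r_1 = 2$ is consistent, though in our setting $r_2 \ge 3$ and we only need $r_{2k}\le 2r_k$ for $k\ge 1$). Granting this, an easy induction on $\lceil \log_2 k\rceil$ gives the lemma: writing $k \le 2^t$ with $t = \lceil \log_2 k \rceil$, monotonicity of $r_k$ in $k$ (which itself needs a one-line justification: a $(k{+}1)$-partition of $Y$ can be merged into a $k$-partition, so $r_{k+1} \ge r_k$... actually one wants $r_k \le r_{k+1}$, i.e. larger multisets still work — careful bookkeeping needed here) lets us replace $k$ by $2^t$, and then $r_{2^t} \le 2 r_{2^{t-1}} \le \cdots \le 2^{t-1} r_2 \le r_2^{t}$, using $2 \le r_2$ at each step. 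So the whole lemma reduces to the doubling inequality.

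For the doubling inequality $r_{2k} \le 2 r_k$, take a multiset $Y \subset X$ with $|Y| = 2 r_k$. Split $Y$ arbitrarily into two sub-multisets $Y', Y''$ each of cardinality $r_k$. By definition of $r_k$ applied to $Y'$ we get a $k$-partition $Y' = Y'_1 \cup \cdots \cup Y'_k$ with $p' \in \conv(Y'_1) \cap \cdots \cap \conv(Y'_k) \ne \emptyset$; similarly $Y'' = Y''_1 \cup \cdots \cup Y''_k$ with a common point $p''$ in all the $\conv(Y''_j)$. Now form the $2k$-partition of $Y$ consisting of the $k$ parts $Y'_1, \dots, Y'_k$ together with the $k$ parts $Y''_1, \dots, Y''_k$. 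For this to witness $r_{2k} \le 2r_k$ we need a single point lying in the convex hull of every one of these $2k$ parts — but $p'$ lies in the hulls of the first $k$ and $p''$ in the hulls of the last $k$, and there is no reason $p' = p''$. This is the main obstacle, and the fix is to \emph{regroup}: instead of keeping the two blocks separate, pair up $Y'_i$ with $Y''_i$ into a single part $Z_i \defeq Y'_i \cup Y''_i$. Then $\conv(Z_i) \supseteq \conv(Y'_i)$ contains $p'$... but now we've only got $k$ parts, not $2k$. The correct approach is the other direction: we should split each of the $r_k$-size pieces further. Let me reorganize — the honest argument is to apply the definition of $r_2$ iteratively.

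Here is the cleaner route, which I would actually write up. Prove $r_{2k} \le 2 r_k$ as follows: given $Y$ with $|Y| = 2r_k$, we will produce a point in $X$ and a $2k$-partition of $Y$ all of whose parts' hulls contain that point. First note $r_{2k}\le r_2 \cdot r_k$ would already suffice for an inductive bound of the shape $r_2^{\lceil\log_2 k\rceil}$ — indeed $r_{2k} \le r_2 r_k$ iterated $t$ times gives $r_{2^t}\le r_2^t$. And $r_{mk} \le r_m r_k$ is the natural ``composition'' inequality for partition numbers: take $|Y| = r_m r_k$; greedily extract $r_m$ disjoint sub-multisets $B_1,\dots,B_{r_m}$ each of size $r_k$; inside each $B_i$ find a $k$-partition with common point $q_i \in X$; now the multiset $\{q_1,\dots,q_{r_m}\}$ has size $r_m$, so it admits an $m$-partition into groups $G_1,\dots,G_m$ with a common point $p \in \bigcap_j \conv(G_j)$; finally, for each $j$ and each $q_i \in G_j$, replace $q_i$ by the $k$ parts of $B_i$ — since each such part's hull contains $q_i$, its hull is contained in (hence the union over the group contains) ... and here one uses that $\conv$ of a union containing $G_j$ contains $\conv(G_j) \ni p$, giving $m k$ parts whose hulls all contain $p$. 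Taking $m = 2$ yields $r_{2k}\le r_2 r_k$, and the lemma follows. The delicate point to get exactly right will be the final ``replacement'' step — verifying that the resulting $mk$ blocks are genuinely a partition of $Y$ (they are: the $B_i$ are disjoint and cover $Y$ since $|Y| = r_m r_k$, and within each $B_i$ we have a partition) and that each block's convex hull contains $p$ (because $\conv(\text{block}) \supseteq \conv(\{q_i : q_i\in G_j\}) \supseteq \conv(G_j) \ni p$, using that if a block contains a point set whose hull contains $q_i$ for each $i$ in the group, then its hull contains the hull of those $q_i$'s).
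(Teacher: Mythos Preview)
The paper does not give its own proof of this lemma; it simply cites Jamison and moves on. So there is nothing in the paper to compare your argument against.

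That said, your eventual approach --- proving the submultiplicativity $r_{mk}\le r_m\,r_k$ and iterating with $m=2$ --- is precisely Jamison's original argument, and it is correct in outline. Your handling of monotonicity is fine: merging two parts of a $(k{+}1)$-partition gives a $k$-partition with the same common point, so $r_k\le r_{k+1}$, which lets you replace $k$ by $2^{\lceil\log_2 k\rceil}$.

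The one place that needs real repair is the ``replacement'' step. As written, ``for each $j$ and each $q_i\in G_j$, replace $q_i$ by the $k$ parts of $B_i$'' produces $r_m\cdot k$ parts (one block $B_i^{(\ell)}$ for each pair $(i,\ell)$), not $mk$ parts; and an individual $B_i^{(\ell)}$ need not contain $p$ in its hull --- it only contains $q_i$. What you must do (and what your parenthetical is groping toward) is, for each pair $(j,\ell)\in[m]\times[k]$, form the single part
\[
Z_{j,\ell}\;=\;\bigcup_{i:\,q_i\in G_j} B_i^{(\ell)}.
\]
This gives exactly $mk$ nonempty parts partitioning $Y$, and since $\conv(Z_{j,\ell})\supseteq\conv(B_i^{(\ell)})\ni q_i$ for every $i$ with $q_i\in G_j$, we get $\conv(Z_{j,\ell})\supseteq G_j$, hence $\conv(Z_{j,\ell})\supseteq\conv(G_j)\ni p$. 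Once you state the blocks this way the verification is immediate; your current phrasing conflates the two constructions and should be rewritten before you call the proof complete.
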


For certain convexity spaces better bounds are known. For instance, for the standard convexity in $\mathbb{R}^d$, Tverberg's theorem states that the $k$th partition number equals $(d+1)(k-1)+1$. One of the long-standing conjectures concerning the partition numbers of convexity spaces asserted that $r_k\leq (k-1)(r_2-1)+1$, which would imply a purely combinatorial proof of Tverberg's theorem (see e.g. Eckhoff's survey \cite{eckhoff-partition}). However, this conjecture was refuted by Bukh \cite{bukh} who constructed convexity spaces with $r_2=4$ and $r_k\geq 3k-1$, for all $k\geq 3$.

\medskip

The {\em Helly number} of a convexity space $(X,\mathcal{C})$, is the smallest integer $h_{\mathcal{C}}$ (if it exists) such that in any finite family of convex sets whose intersection is empty we can find a subfamily of at most $h_{\mathcal{C}}$ sets whose intersection is empty.  Helly's theorem \cite{helly} states that for the standard convexity in $\mathbb{R}^d$, the Helly number equals $d+1$. In general, we have the following bound on the Helly number of a convexity space. 

\begin{lemma}[Levi \cite{levi}]\label{levi}
For any convexity space, we have $h_{\mathcal{C}}<r_2$.
\end{lemma}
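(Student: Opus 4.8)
The plan is to show the equivalent bound $h_{\mathcal{C}}\le r_2-1$, which immediately yields $h_{\mathcal{C}}<r_2$ since $r_2$ is an integer. I would argue by contradiction. Suppose there is a finite family of convex sets with empty intersection such that every subfamily of at most $r_2-1$ of its members has non-empty intersection. Among all such families choose one, say $F=\{C_1,\dots,C_k\}$, with $k$ as small as possible. Then $k\ge r_2$, for otherwise $F$ itself would be a subfamily of size at most $r_2-1$ with empty intersection. By the minimality of $k$, for every $i\in[k]$ the subfamily $\{C_j:j\ne i\}$ has non-empty intersection, so I can pick a point $p_i\in\bigcap_{j\ne i}C_j$.

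Next I would feed the points $p_1,\dots,p_k$ into a Radon partition. The only point of care is that the $p_i$ need not be distinct: if $p_i=p_j$ for some $i\ne j$, then this common point lies in $C_\ell$ for every $\ell\in[k]$ — using the defining property of $p_i$ when $\ell\ne i$, and of $p_j$ when $\ell=i$ — contradicting $\bigcap F=\emptyset$. So I may assume the $p_i$ are distinct, and then $P=\{p_1,\dots,p_k\}$ is a subset of $X$ with $|P|=k\ge r_2$. By the definition of the Radon number there is a partition $P=P_1\sqcup P_2$ with $\conv(P_1)\cap\conv(P_2)\ne\emptyset$; fix a point $q$ in this intersection and write $P_1=\{p_i:i\in I\}$ and $P_2=\{p_i:i\in[k]\setminus I\}$.

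The step that closes the argument is the claim $q\in\bigcap F$. Fix any $\ell\in[k]$. If $\ell\notin I$, then every index occurring in $P_1$ differs from $\ell$, so $p_i\in C_\ell$ for all $i\in I$; as $C_\ell$ is convex this gives $\conv(P_1)\subseteq C_\ell$, hence $q\in C_\ell$. Symmetrically, if $\ell\in I$ then $\conv(P_2)\subseteq C_\ell$ and again $q\in C_\ell$. Since $\ell$ was arbitrary, $q\in\bigcap_{\ell\in[k]}C_\ell$, contradicting $\bigcap F=\emptyset$. Therefore no such family exists, so every family of convex sets with empty intersection contains a subfamily of at most $r_2-1$ members with empty intersection, i.e. $h_{\mathcal{C}}\le r_2-1<r_2$.

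I do not expect a genuine obstacle here: the argument hinges on the single idea of passing to an inclusion-minimal family with empty intersection and applying the Radon partition to the witness points $p_i$. The only thing requiring a little attention is the bookkeeping when some of the $p_i$ coincide, which as noted above is an even easier subcase.
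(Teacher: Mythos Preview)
The paper does not actually supply a proof of this lemma; it merely states it with a citation to Levi \cite{levi}. Your argument is correct and is precisely the classical proof attributed to Levi: pass to an inclusion-minimal non-intersecting family, pick witness points $p_i\in\bigcap_{j\ne i}C_j$, apply a Radon partition to these $\ge r_2$ points, and observe that any point in $\conv(P_1)\cap\conv(P_2)$ lies in every $C_\ell$. Your handling of the case where two witness points coincide is also fine (and in many presentations is simply absorbed by allowing multisets in the definition of the Radon number, as the paper does for $r_k$). One tiny stylistic remark: rather than minimizing over all families witnessing $h_{\mathcal{C}}\ge r_2$, you could equivalently start from any finite family with empty intersection and pass to a minimal subfamily with empty intersection; this avoids a line of justification for why deleting $C_i$ leaves a family with non-empty intersection. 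Either way, the proof stands.
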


The {\em colorful Helly theorem} discovered by Lov{\'a}sz, and independently by B{\'a}r{\'a}ny \cite{col-hell}, states that if $F_1, \dots, F_{d+1}$ are finite families of convex sets in $\mathbb{R}^d$ such that $\bigcap_{i=1}^{d+1}S_i\neq\emptyset$ for all $S_i\in F_i$ and all $i\in [d+1]$, then for some $i\in[d+1]$ we have $\bigcap_{S\in F_i} S \neq\emptyset$. Note that this implies Helly's theorem by setting $F_1 = \cdots =F_{d+1}$. 

The colorful Helly theorem has many applications in discrete geometry and was originally used by B{\'a}r{\'a}ny (in dual form) to prove the first selection lemma \cite[Theorem 5.1]{col-hell} (see also \cite[chapter 9]{mato}). Later Sarkaria \cite{sarkaria}  showed that it implies Tverberg's theorem (see also \cite{BO} and \cite[chapter 8]{mato}). It should also be noted that the colorful Helly theorem has a topological generalization due to Kalai and Meshulam \cite{top-col-hel}, and an algebraic generalization due to Fl{\o}ystad \cite{floy}.

\medskip

We now establish a colorful Helly theorem for general convexity spaces with bounded Radon number.

\begin{lemma}\label{colorful}
For every integer $r\geq 3$ there exists an integer $m=m(r)$ with the following property:
Let $F_1, \dots, F_m$ be families of convex sets in a convexity space with Radon number at most $r$. If $\bigcap_{i=1}^m S_i \neq \emptyset$ for all $S_i\in F_i$ and all $i\in [m]$, then there exists $1\leq i \leq m$ such that $\bigcap_{S\in F_i}S \neq \emptyset$.
\end{lemma}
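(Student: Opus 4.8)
The plan is to prove Lemma \ref{colorful} by reducing the colorful statement to a bound on the $k$th partition number, exploiting the fact that a bounded Radon number controls all the $r_k$ via Jamison's Lemma \ref{jamsman}. The key idea is a standard trick for colorful Helly-type results: suppose for contradiction that for every color class $F_i$ the intersection $\bigcap_{S\in F_i}S$ is empty. By Levi's Lemma \ref{levi} the Helly number $h_{\mathcal{C}}$ is at most $r-1$, so from each $F_i$ we may extract a subfamily of at most $h_{\mathcal{C}} \le r-1$ convex sets with empty intersection; call this witness subfamily $G_i \subseteq F_i$. Thus we obtain $m$ small ``bad'' subfamilies $G_1,\dots,G_m$, each of size at most $r-1$, each with empty intersection, while the colorful hypothesis guarantees that any transversal (one set from each $F_i$, hence in particular compatible choices across the $G_i$) has a common point.

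First I would set up the combinatorial extremal problem this creates. We have a ground collection of at most $m(r-1)$ convex sets, organized into $m$ groups of size $\le r-1$; picking one set from each group always yields a nonempty intersection, yet each group itself intersects to the empty set. I want to derive a contradiction once $m$ is large enough, and the mechanism should be Tverberg-type: if $m$ is large, then among the convex sets appearing in the $G_i$ there is enough ``mass'' that we can find a partition into many parts whose convex hulls share a common point, and then play this common point against the emptiness of some $\bigcap G_i$. Concretely, I would try to encode the choices: for each group $G_i$, since $\bigcap_{S \in G_i} S = \emptyset$, the $|G_i|$ sets cannot all contain any single point; this is the ``local obstruction'' at color $i$. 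The colorful hypothesis is the ``global compatibility''. The contradiction should come from counting: there are only finitely many ``types'' a set can have relative to a finite partition point configuration, so if $m$ exceeds the number of types (a function of $r$ via the $r_k$), two groups must be handled simultaneously by a single Tverberg partition, forcing a common point into a group that was supposed to have empty intersection.

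More precisely, here is the route I expect to work. Fix $m$ to be determined. For each $i$, let $G_i = \{S_i^1,\dots,S_i^{t_i}\}$ with $t_i \le r-1$ and $\bigcap_j S_i^j = \emptyset$. Form a multiset $Y \subset X$ as follows: since each $G_i$ has empty intersection, for each $i$ we can (this is where care is needed) select points witnessing the various pairwise or tuple-wise intersections and hulls; the intent is to build a multiset of size at least $r_k$ for some large $k$. Apply the definition of the $k$th partition number to get a $k$-partition $Y_1,\dots,Y_k$ with $\bigcap_\ell \conv(Y_\ell) \ne \emptyset$, say containing a point $p$. If $k$ is large enough relative to $m$ and $r$, a pigeonhole argument places the parts of this partition so that for some color $i$ every set of $G_i$ contains one of the $\conv(Y_\ell)$ — hence contains $p$ — contradicting $\bigcap_{S \in G_i} S = \emptyset$. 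Using Jamison's bound $r_k \le r_2^{\lceil \log_2 k\rceil} \le r^{\lceil \log_2 k \rceil}$, one can back-solve for an explicit $m(r)$, which will indeed come out looking like a tower/Stirling-type expression in $r$ as the remark predicts.

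The main obstacle, and the part I would need to think hardest about, is the middle step: translating ``each $G_i$ has empty intersection while all colorful transversals are nonempty'' into a genuine multiset of the right size for the partition number, and then ensuring the pigeonhole step actually forces $p$ into all of some $G_i$. The subtlety is that $\conv(Y_\ell)$ lying inside a convex set $S$ is not automatic from the points of $Y_\ell$ lying in $S$ unless we choose $Y$ so that each part sits inside a ``cell'' determined by the $G_i$'s; so the construction of $Y$ must be coordinated with a decomposition of $X$ into finitely many regions on which the membership pattern in $\bigcup_i G_i$ is constant. Getting the right definition of these regions in an abstract convexity space — where we have no hyperplanes or faces, only the convex-hull operator and the Radon/partition/Helly numbers — is the crux, and I would expect the clean way to do it is to argue entirely at the level of intersection patterns and apply the partition number to a carefully chosen transversal system rather than to literal points of $X$.
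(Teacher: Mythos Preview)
Your high-level strategy matches the paper's: assume each $F_i$ has empty intersection, use Levi's Lemma \ref{levi} to extract witnesses $G_i\subset F_i$ of size at most $r-1$, produce a multiset of points in $X$, and apply the $k$th partition number (bounded via Jamison's Lemma \ref{jamsman}) to reach a contradiction. But the central combinatorial construction is missing from your proposal, and your suggested ways of filling it in are pointed in the wrong direction.

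The points of the multiset $Y$ do \emph{not} come from the emptiness of the $G_i$; they come from the \emph{colorful hypothesis}. The paper's construction is this. Set $k=r-1$, pad each $G_i$ (with repetitions if needed) to exactly $k$ sets $S_i^{(1)},\dots,S_i^{(k)}$, and take $n=k^{\lceil\log_2 r\rceil}\ge r_k$. Now choose $m=S(n,k)$ so that the color classes are in bijection with the $k$-partitions $\mathcal P_1,\dots,\mathcal P_m$ of $[n]$. For each $t\in[n]$ form the colorful transversal $X_t=\{S_i^{(j)}:t\in P_i^{(j)}\}$ and pick $x_t\in\bigcap X_t$ by hypothesis. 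Applying the $k$th partition number to $\{x_1,\dots,x_n\}$ yields a $k$-partition of $[n]$, which \emph{is} $\mathcal P_i$ for some $i$; by construction every $x_t$ with $t\in P_i^{(j)}$ lies in $S_i^{(j)}$, so the common Tverberg point lies in $\bigcap_j S_i^{(j)}$, the desired contradiction.

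The step you flagged as the crux---arranging that each part $Y_\ell$ lands inside some $S_i^{(j)}$---is thus handled not by a pigeonhole argument over pre-given colors, nor by decomposing $X$ into regions of constant membership pattern, but by \emph{defining} the color classes to be the $k$-partitions of the index set of the points. Every possible outcome of the Tverberg partition is then, by design, exactly one of the colors. Your proposal never produces the points from colorful transversals and never links colors to partitions, so as written there is no mechanism to force $\conv(Y_\ell)\subset S_i^{(j)}$; the ``region'' and ``intersection-pattern'' directions you contemplate are unnecessary once you see this indexing trick.
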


\begin{proof}
Let $k=r-1$ and $n=k^{\lceil\log_2r\rceil}$.
We will prove the theorem for $m=S(n,k)$. For contradiction, suppose the families $F_1, \dots, F_m$ satisfy $\bigcap_{S\in F_i}S=\emptyset$ for every $i\in[m]$. From each $F_i$ choose sets $S_i^{(1)}, \dots, S_i^{(k)}$ (with repetitions if necessary) such that $\bigcap_{j=1}^kS_i^{(j)}=\emptyset$, and set 
\[G_i=\{S_i^{(1)}, \dots, S_i^{(k)}\}.\] 
This is possible by definition of the Helly number and Lemma \ref{levi}. 

Let $\mathcal{P}_1, \dots, \mathcal{P}_m$ be the distinct $k$-partitions of $[n]$, which we denote by  
\[\mathcal{P}_i = \{P_i^{(1)}, \dots, P_i^{(k)}\},\] where $P_i^{(1)} \cup \cdots \cup P_i^{(k)} = [n]$. 

For every $t\in [n]$ we define the subfamily $X_t\subset \bigcup_{i=1}^m G_i$ according to the rule
\[S_i^{(j)}\in X_t \iff t\in P_i^{(j)},\]
which implies that $|X_t\cap G_i|=1$ for every $t\in [n]$ and $i\in [m]$. By the hypothesis, we can find a point 
\[x_t\in \textstyle{\bigcap}_{S\in X_t}S\]
for every $t\in [n]$.

By Lemma \ref{jamsman}, we have $n \geq r_k$, and therefore there exists a partition $\mathcal{P}_i$ and a point $x\in X$ such that
\[x\in \conv \{x_t\}_{t\in P_i^{(j)}}\]
for every $j\in [k]$. But this implies that $x\in S_i^{(j)}$ for every $j\in [k]$, which contradicts our initial assumption that $\bigcap_{j=1}^kS_i^{(j)}=\emptyset$.\end{proof}

{\em Remark.} Using elementary bounds on the $S(n,k)$ \cite{rennie} our proof gives a bound on $m(r)$ which is roughly $r^{r^{\lceil \log_2 r\rceil}}$. We have little reason to believe that this bound is optimal, and certainly for specific convexity spaces (such as the standard convexity in $\mathbb{R}^d$) it is very far from the truth.

\section{A fractional Helly theorem} \label{sec:fractional}
Let $H=(V,E)$ be a $k$-uniform hypergraph with vertex set $V$ and edge set $E$. A {\em clique} in $H$ is a subset $S\subset V$ such that $\binom{S}{k}\subset E$, and we let $\omega(H)$ denote the maximum number of vertices of a clique in $H$. For an integer $m \geq k$, let $c_m(H)$ denote the number of cliques in $H$ on $m$ vertices. 

We refer to the set $M=\binom{V}{k}\setminus E$ as the set of {\em missing edges}, and we say that a family $\{\tau_1, \dots, \tau_m\}\subset M$ is a {\em complete $m$-tuple of missing edges} if 
\begin{enumerate}
    \item $\tau_i\cap \tau_j=\emptyset$ for all $i\neq j$, and
    \item $\{t_1,\dots, t_m\}$ is a clique in $H$ for all $t_i\in \tau_i$ and all $i\in [m]$. 
\end{enumerate}

We need the following result \cite[Theorem 1.2]{holmsen-arx} for the proof of Theorem \ref{gen-frac-helly}. It is a generalization of a theorem due to Gy{\'a}rf{\'a}s, Hubenko, and Solymosi \cite{ghs} which deals with the special case $k = m =2$.

\begin{lemma}\label{dk-theorem}
For any $m \geq k >1$ and $\alpha\in (0,1)$, there exists a constant $\beta=\beta(\alpha, k, m) \in (0,1)$ with the following property:
Let $H$ be a $k$-uniform hypergraph on $n$ vertices and $c_m(H) \geq \alpha\binom{n}{m}$. If $H$ does not contain a complete $m$-tuple of missing edges, then $\omega(H)\geq \beta n$.
\end{lemma}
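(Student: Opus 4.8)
The plan is to prove the contrapositive in a strong form: assuming $\omega(H)<\beta n$, we construct a complete $m$-tuple of missing edges, contradicting the hypothesis. First I would record that the density hypothesis is only needed to make $H$ dense as a $k$-uniform hypergraph: since every $m$-clique contains $\binom{m}{k}$ edges and every edge lies in at most $\binom{n-k}{m-k}$ of them, $c_m(H)\ge\alpha\binom{n}{m}$ gives $|E|\ge\alpha\binom{n}{k}$.

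The first real step is to apply the dependent random choice method (in its $k$-uniform form) to $H$, producing a vertex set $W$ with $|W|\ge\delta_0 n$, for some $\delta_0=\delta_0(\alpha,k)>0$, that is robust in the following sense: writing $t:=\binom{m-1}{k-1}k^{k-1}$, for every family $\mathcal{S}$ of at most $t$ many $(k-1)$-subsets of $W$ the common co-degree set $\{v\in W:\ S\cup\{v\}\in E\ \text{for all}\ S\in\mathcal{S}\}$ has size at least $\delta_1 n$ for some $\delta_1=\delta_1(\alpha,k,m)>0$. The key point is the \emph{simultaneous} control of the $t$-fold intersection of co-degree sets --- bounding each one individually does not suffice --- which is exactly what the ``delete one vertex from every bad subset'' step in dependent random choice provides; for $k=2$ this is the familiar statement that a dense graph contains a linear-size set all of whose bounded subsets have linearly many common neighbours.

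Set $\beta:=\delta_1/2$ and take $n$ large. Now I would build pairwise disjoint $k$-sets $\tau_1,\dots,\tau_m\subseteq W$ greedily, keeping the invariant that $\tau_{i_1}\times\cdots\times\tau_{i_k}\subseteq E$ for every $k$-subset $\{i_1,\dots,i_k\}$ of the indices chosen so far. Given $\tau_1,\dots,\tau_{j-1}$, the requirements on $\tau_j$ are: disjointness from $\tau_1\cup\cdots\cup\tau_{j-1}$; being a missing edge; and, for each $(k-1)$-subset $I\subseteq[j-1]$ and each transversal $(t_i)_{i\in I}\in\prod_{i\in I}\tau_i$, having $\{t_i\}_{i\in I}\cup\{v\}\in E$ for all $v\in\tau_j$. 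The last condition says exactly $\tau_j\subseteq Z_j$, where $Z_j$ is the common co-degree set of the (at most $t$) relevant $(k-1)$-sets; by the robustness of $W$, $|Z_j|\ge\delta_1 n$, so $|Z_j\setminus(\tau_1\cup\cdots\cup\tau_{j-1})|\ge\delta_1 n-mk\ge\beta n>\omega(H)$. Hence $Z_j\setminus(\tau_1\cup\cdots\cup\tau_{j-1})$ is not a clique, so it contains a missing edge, which we take for $\tau_j$. (When $j<k$ there are no $(k-1)$-subsets of $[j-1]$, so $Z_j=W$ and any missing edge of $W$ avoiding the earlier $\tau_i$ works.)

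After $m$ steps we have pairwise disjoint missing edges $\tau_1,\dots,\tau_m$, and any $k$-subset of a rainbow $m$-set $\{t_1,\dots,t_m\}$ (with $t_i\in\tau_i$) consists of one vertex from each of $k$ of the $\tau_i$, hence was forced to be an edge at the step where the largest of those indices was added. Thus every rainbow $m$-set is a clique, so $\{\tau_1,\dots,\tau_m\}$ is a complete $m$-tuple of missing edges --- the contradiction we wanted, giving $\omega(H)\ge\beta n$. I expect the main obstacle to be the first step: setting up the hypergraph dependent random choice so that bounded intersections of $(k-1)$-set co-degree sets remain linear in $n$ (the passage from $k=2$ to general $k$, with several constraints handled at once, is the delicate point); the rest is essentially bookkeeping, and already for $k=m=2$ the scheme gives a clean proof of the Gy{\'a}rf{\'a}s--Hubenko--Solymosi theorem, with $\tau_1,\tau_2$ being the two ``diagonals'' of an induced $C_4$.
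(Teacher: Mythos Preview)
The paper does not prove this lemma; it is quoted as \cite[Theorem 1.2]{holmsen-arx} and used as a black box. So there is no in-paper argument to compare against, and your proposal is really an attempt at an independent proof of that external result.

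Your greedy construction of $\tau_1,\dots,\tau_m$ is clean and correct \emph{given} the set $W$ you postulate, but the first step --- the hypergraph dependent random choice producing a linear-size $W$ in which every family of at most $t$ many $(k-1)$-subsets has a common link of size $\ge\delta_1 n$ \emph{inside $W$} --- is genuinely problematic, not merely delicate. Already for $k=m=2$ (so $t=2$) it fails outright: in $K_{n,n}$ one has $c_2\ge\tfrac{1}{2}\binom{2n}{2}$, yet for \emph{every} choice of $W$ either $W$ lies in one side (and then two vertices of $W$ have all their common neighbours in the other side, hence none in $W$), or $W$ meets both sides (and then a pair with one vertex in each side has no common neighbour at all). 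So the property you ask of $W$ cannot be obtained from the edge-density alone; some use of the $m$-clique density and of the assumed small $\omega(H)$ is needed even to set up the greedy. Standard dependent random choice only controls common neighbourhoods in $V$, not in $W$, and that is not enough here because the $(k-1)$-sets you feed into step $j+1$ come from $\tau_j$, which you must keep inside the ``good'' set for the next round. You correctly flag this as the main obstacle, but as stated the scheme does not go through; the proof in \cite{holmsen-arx} avoids this by an inductive argument rather than a single dependent-random-choice step.
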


{\em Remark.} For fixed $k$ and $m$ the proof in \cite{holmsen-arx} gives a lower bound on $\beta = \beta(\alpha, k, m)$ which is in $\Omega(\alpha^{k^{(m-1)}})$.

\begin{proof}[Proof of Theorem \ref{gen-frac-helly}]
Let $m = m(r)$ be the function from Lemma \ref{colorful} and set $k=r-1$. For given $\alpha\in (0,1)$ we prove the theorem with $\beta = \beta(\alpha,k,m)>0$, using the function from Lemma \ref{dk-theorem}. 

Define a $k$-uniform hypergraph $H(F,E)$ where $E$ is the set of intersecting $k$-tuples of $F$, that is,
\[E = \left\{ \sigma\in \textstyle{\binom{F}{k}} \; : \; \textstyle{\bigcap}_{S\in \sigma}S\neq \emptyset \right\}.\]
Note that an intersecting $m$-tuple in $F$ corresponds to a clique on $m$ vertices in $H$.
A complete $m$-tuple of missing edges in $H$ corresponds to pairwise disjoint subfamilies $F_1, \dots, F_m$, with $|F_i|= k$, such that 
\[\textstyle{\bigcap}_{S\in F_i}S=\emptyset \; \text{ and } \; \textstyle{\bigcap}_{i=1}^m S_i \neq \emptyset\] for all $S_i\in F_i$ and all $i\in [m]$. By Lemma \ref{colorful} this can not exist, and therefore $H$ does not contain a complete $m$-tuple of missing edges. By the fractional Helly hypothesis we have $c_m(H)\geq \alpha\binom{n}{m}$, and so by Lemma \ref{dk-theorem} we have $\omega(H)\geq \beta n$. This means there exists a subfamily $G\subset F$ with $|G|\geq \beta n$ such that every $k$-tuple of $G$ is intersecting. By Lemma \ref{levi} it follows that $\bigcap_{S\in G}S\neq\emptyset$. 
\end{proof}

\section{Transversal numbers}\label{sec:transversals}

Let $F$ be a finite set system over a set $X$. Given an $\varepsilon\in (0,1)$ and a finite multiset $Y\subset X$, a {\em weak $\varepsilon$-net} for $Y$ (with respect to $F$) is a subset $N\subset X$ such that $N\cap S\neq\emptyset$ for any $S\in F$ with $|S\cap Y|\geq \varepsilon |Y|$ (where elements of $Y$ are counted with multiplicity). 

For the standard convexity in $\mathbb{R}^d$, the {\em weak $\varepsilon$-net theorem} \cite{weak-nets} asserts that any finite multiset $Y\subset \mathbb{R}^d$ admits a weak $\varepsilon$-net (with respect to the standard convex sets) of size at most $f(d,\varepsilon)$. It is a central problem in discrete geometry to understand the correct growth rate of the function $f(d,\varepsilon)$ for fixed $d$ and $\varepsilon \to 0$. It is known that there are sets $Y\in \mathbb{R}^d$ which require weak $\varepsilon$-nets of size $\Omega(\varepsilon^{-1}(\log\varepsilon^{-1})^{d-1})$ \cite{lower-nets}, while the best known upper bound is roughly $\varepsilon^{-d}$ \cite{chazelle}. A recent breakthrough is due to Rubin \cite{natan} who showed $f(d,\varepsilon)\leq \varepsilon^{-(\frac{3}{2}+\delta)}$ for arbitrary small $\delta>0$.

In \cite[Question 3]{bukh}, Bukh asked whether the weak $\varepsilon$-net theorem can be extended to arbitrary convexity spaces. More specifically, does there exist a function $f(r,\varepsilon)$ with the following property: Given a convexity space $(X,\mathcal{C})$ with Radon number at most $r$ and an arbitrary (multi)set $Y\subset X$, does $Y$ admit a weak $\varepsilon$-net (with respect $\mathcal{C}$), where the size of the net is at most $f(r,\varepsilon)$? Bukh himself showed that $f(3,\epsilon)\leq O(\varepsilon^{-2})$ \cite[Proposition 3]{bukh}. 

More recently, Moran and Yehudayoff \cite{moran} considered Bukh's question in the setting of {\em separable} convexity spaces.\footnote{Separable convexity spaces $(X,\mathcal{C})$ are equipped with the additional structure of {\em half-spaces}, i.e. convex sets $H\in \mathcal{C}$ such that $(X\setminus H)\in \mathcal{C}$, and a {\em separation axiom} which requires that for every convex set $S\in\mathcal{C}$ and $x\in (X\setminus S)$ there exists a half-space $H$ such that $S\subset H$ and $x\notin H$.} In this case they showed the existence of weak $\varepsilon$-nets of size at most $(120r^2\varepsilon^{-1})^{{4r^2\ln\varepsilon^{-1}}}$.

\medskip

The relationship between weak $\varepsilon$-nets and transversal numbers is given by the following fact (which follows directly from the definitions). Let $F$ be a finite set system over a set $X$. The following statements are equivalent (with $g(x) = f(\frac{1}{x})$):

\begin{enumerate}
    \item There exists a function $g$ such that for any subsystem $F'\subset F$, we have $\tau(F')\leq g(\tau^*(F'))$.
\item There exists a function $f$ such that for every $\varepsilon\in (0,1)$ and any multiset $Y\subset X$, there is a weak $\varepsilon$-net for $Y$ with respect to $F$ of size at most $f(\varepsilon)$.
\end{enumerate}
By this equivalence, Theorem \ref{weak-epsilon} gives an affirmative answer to Bukh's question.

\medskip

We now review the work of Alon, Kalai, Matou{\v s}ek, and Meshulam \cite{akmm}, in which they investigated the relationship between transversal numbers of set systems and the fractional Helly property. Borrowing their notation, we say that a finite set system $F$ over a set $X$ has property FH$(k,\alpha, \beta)$ if for any subsystem $G \subset F$, with $|G|=n$, in which at least $\alpha\binom{n}{k}$ of the $k$-tuples of $G$ have non-empty intersection, there is an element of $X$ which is contained in at least $\beta n$ members of $G$.

For a finite set system $F$, let $F^\cap$ denote the family of all intersections of the sets in $F$, that is, 
\[F^{\cap} = \{ \textstyle{\bigcap}_{S\in H} S :  H\subseteq F\}.\]
We need the following weak $\varepsilon$-net theorem for abstract set systems due to Alon {\em et al.} 

\begin{theorem*}[\cite{akmm}, Theorem 9] For every $d\geq 1$ there exists an $\alpha>0$ such that the following holds. Let $F$ be a finite family of sets and and suppose $F^{\cap}$ satisfies {\em FH}$(d+1, \alpha, \beta)$ with some $\beta>0$. Then we have \[\tau(F)\leq c_1\cdot(\tau^*(F))^{c_2},\] where $c_1$ and $c_2$ depend only on $d$ and $\beta$.
\end{theorem*}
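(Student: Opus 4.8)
The plan is to deduce the transversal bound from the fractional Helly hypothesis by way of a weak $\varepsilon$-net, using LP duality to connect the two. First I would unwind the fractional transversal. Writing $t=\tau^*(F)$, an optimal fractional cover $f\colon X\to[0,1]$ satisfies $\sum_x f(x)=t$ and $\sum_{x\in S}f(x)\geq 1$ for every $S\in F$. After clearing denominators this produces a finite multiset $Y\subseteq X$ whose uniform measure $\mu$ satisfies $\mu(S)\geq 1/t$ for all $S\in F$. Setting $\varepsilon=1/t$, every member of $F$ is $\varepsilon$-heavy with respect to $\mu$, so a weak $\varepsilon$-net for $Y$ is exactly a transversal of $F$. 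The theorem therefore reduces to producing a weak $\varepsilon$-net of size polynomial in $1/\varepsilon$, and this is where the fractional Helly property must be brought to bear.

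The quantitative input I would extract from heaviness is a Hadwiger--Debrunner-type condition, obtained purely by averaging. For any family $G$ of $\varepsilon$-heavy sets and a point $x$ drawn from $\mu$, the expected number of members of $G$ containing $x$ is $\sum_{S\in G}\mu(S)\geq \varepsilon|G|$; applying the convex function $z\mapsto\binom{z}{d+1}$ together with Jensen's inequality shows that $G$ has at least roughly $\varepsilon^{d+1}\binom{|G|}{d+1}$ intersecting $(d+1)$-tuples. More directly, once $|G|>d/\varepsilon$ the expected depth of a random point exceeds $d$, so some point lies in $d+1$ members of $G$; hence \emph{any} $p=\lceil d/\varepsilon\rceil+1$ of the heavy sets contain a $(d+1)$-tuple with a common point. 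This is a $(p,d+1)$-property with $p=O(dt)$.

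With this in hand I would run the Alon--Kleitman machinery. The hypothesis {\em FH}$(d+1,\alpha,\beta)$ converts an abundance of intersecting $(d+1)$-tuples into a point of positive-fraction depth: whenever a subfamily of $F^{\cap}$ has at least an $\alpha$-fraction of its $(d+1)$-tuples intersecting, some point of $X$ meets a $\beta$-fraction of the subfamily. Combining this with LP duality—passing to the fractional matching dual of $\tau^*$ on the heavy subfamily and using the depth bound to cap the total matching weight—bounds the transversal of the heavy sets by a polynomial in $p$, and therefore by a polynomial in $t=\tau^*(F)$. Tracing the exponents through the $\varepsilon^{-(d+1)}$ loss in the tuple count and through the parameters of {\em FH} yields constants $c_1,c_2$ depending only on $d$ and $\beta$, as required.

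The main obstacle is the step I have compressed into ``the Alon--Kleitman machinery'': promoting a single point of large depth into a transversal of size bounded \emph{independently of} $|F|$. A naive greedy peeling removes a constant fraction of the sets per point, but terminates only after $\Theta(\log|F|)$ rounds, which is not a function of $\tau^*$ alone. The remedy, and the technical heart of the argument, is to iterate the fractional Helly step inside an LP-duality and fractional-packing scheme so that the number of rounds—equivalently the net size—is controlled by $t$ rather than by $|F|$. It is precisely here that the polynomial (rather than logarithmic) dependence, and the role of $F^{\cap}$ being closed under intersection, become essential.
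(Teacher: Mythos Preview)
The paper does not prove this statement at all: it is quoted verbatim from \cite{akmm} and used as a black box in the one-paragraph proof of Theorem~\ref{weak-epsilon}. There is therefore no ``paper's own proof'' to compare your proposal against.

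That said, since you have attempted a sketch, a brief assessment: the reduction to a weak $\varepsilon$-net via an optimal fractional cover is correct and standard, and your averaging argument producing a $(p,d+1)$ condition with $p=O(d/\varepsilon)$ is fine. But the part you yourself flag as ``the main obstacle'' is in fact the entire content of the theorem you are trying to prove. Saying you will ``run the Alon--Kleitman machinery'' to establish what is literally the Alon--Kalai--Matou\v{s}ek--Meshulam weak $\varepsilon$-net theorem is circular, and your final paragraph does not supply the missing mechanism. The actual argument in \cite{akmm} does not proceed via the $(p,q)$ route you outline; rather, it first derives a \emph{first selection lemma} from the fractional Helly hypothesis (for any multiset $Y$ there is a point common to a positive fraction of the intersections of $(d{+}1)$-tuples from $Y$), and then bootstraps this selection lemma by an iterated sampling scheme on $Y$ to build a net whose size is polynomial in $1/\varepsilon$. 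The closure under intersection encoded in $F^{\cap}$ is used precisely so that these iterated $(d{+}1)$-wise intersections remain in the system to which FH applies. Your sketch never reaches this idea, so as written it is an outline with the key step missing rather than a proof.
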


\begin{proof}[Proof of Theorem \ref{weak-epsilon}] 
Note that for any convexity space $(X,\mathcal{C})$ and any  $F\subset \mathcal{C}$ we have $F^\cap\subset \mathcal{C}$. If the Radon number of $(X, \mathcal{C})$ is at most $r$, then Theorem \ref{gen-frac-helly} implies that there exists an $m=m(r)$ such that for any $\alpha>0$ there is a $\beta>0$ such that property FH$(m,\alpha, \beta)$ holds for any subfamily of $\mathcal{C}$, in particular for any $F^\cap$. Our theorem therefore follows from the weak $\varepsilon$-net theorem for abstract set systems \cite[Theorem 9]{akmm}. \end{proof}

\medskip

{\em Remark.} It would be interesting to find further properties of set systems which guarantee a weak $\varepsilon$-net theorem, and some directions are suggested by Moran and Yehudayoff \cite[section 6]{moran}. Finally, let us point out that our results also imply a $(p,q)$ theorem in convexity spaces with bounded Radon number. This follows immediately from the results in \cite{akmm}. (We leave the details to the reader.)

\begin{theorem}
Let $r\geq 3$ and let $m=m(r)$ be the value from Theorem \ref{gen-frac-helly}. For any $p\geq q \geq m$ there exists a constant $c= c(p,q)$ with the following property:
Let $F$ be a family of convex sets in a convexity space with Radon number at most $r$, and suppose among any $p$ members of $F$ there are some $q$ of them with non-empty intersection. Then $\tau(F)\leq c$.
\end{theorem}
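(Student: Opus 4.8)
The plan is to derive this $(p,q)$ theorem from the fractional Helly property established in Theorem \ref{gen-frac-helly} by invoking the abstract $(p,q)$ machinery of Alon, Kalai, Matou{\v s}ek, and Meshulam \cite{akmm}. The key observation is that the family $F^\cap$ of all intersections of members of $F$ again lies in $\mathcal{C}$, so it has Radon number at most $r$, and hence by Theorem \ref{gen-frac-helly} it satisfies property FH$(m,\alpha,\beta)$ for the fixed $m=m(r)$: for every $\alpha\in(0,1)$ there is a $\beta=\beta(\alpha,r)\in(0,1)$ such that whenever a subfamily $G\subset F^\cap$ of size $n$ has at least $\alpha\binom{n}{m}$ intersecting $m$-tuples, some point of $X$ lies in at least $\beta n$ members of $G$.

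First I would recall the relevant result from \cite{akmm}: if a finite set system $F$ is such that $F^\cap$ has the fractional Helly property FH$(k,\alpha,\beta)$ for some $k$, some $\alpha>0$, and some $\beta>0$, then $F$ satisfies a $(p,q)$ theorem for all $p\geq q\geq k$, that is, there is a constant $c=c(p,q)$ (also depending on the parameters $k,\alpha,\beta$) such that the ``$(p,q)$ property'' — among any $p$ members of $F$ some $q$ intersect — forces $\tau(F)\leq c$. The standard route in \cite{akmm} is that the $(p,q)$ property, together with a Ramsey-type argument, supplies the density hypothesis $\alpha\binom{n}{m}$ of FH, which then yields a point in many sets; iterating (or passing to the fractional transversal number and applying the weak $\varepsilon$-net theorem already quoted as \cite[Theorem 9]{akmm}) bounds $\tau(F)$ by an absolute constant. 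So the second step is simply to feed our FH$(m,\alpha,\beta)$ for a suitable fixed $\alpha$ (depending only on $p,q,m$ via the Ramsey bound) into that argument with $k=m$, obtaining $c=c(p,q)$ as claimed.

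There is essentially no obstacle here beyond bookkeeping: the only point requiring a line of care is that the fractional Helly parameter $m$ in Theorem \ref{gen-frac-helly} is a fixed function of $r$ that does not depend on $\alpha$, which is exactly the form needed so that the constant $\alpha$ produced by the $(p,q)$-to-density reduction in \cite{akmm} can then be plugged back into Theorem \ref{gen-frac-helly} to extract $\beta$; the order of quantifiers matches. One should also note the harmless requirement $q\geq m$, which guarantees that the $q$-wise intersecting subfamilies correspond to cliques large enough to trigger FH$(m,\cdot,\cdot)$. Since all the heavy lifting is done in \cite{akmm} and in Theorem \ref{gen-frac-helly}, the proof is a short deduction, and indeed the paper states that the details are left to the reader.

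\begin{proof}
Let $F$ be a family of convex sets in a convexity space $(X,\mathcal{C})$ with Radon number at most $r$, and let $m=m(r)$ be as in Theorem \ref{gen-frac-helly}. As observed in the proof of Theorem \ref{weak-epsilon}, $F^\cap\subset\mathcal{C}$, so Theorem \ref{gen-frac-helly} shows that for every $\alpha\in(0,1)$ there is a $\beta=\beta(\alpha,r)>0$ such that $F^\cap$ satisfies property FH$(m,\alpha,\beta)$. Fix $p\geq q\geq m$. By the reduction from the $(p,q)$ property to a fractional Helly hypothesis in \cite{akmm}, the $(p,q)$ assumption — among any $p$ members of $F$ some $q$ have non-empty intersection — implies that in any subfamily $G\subset F$ with $|G|=n$ large, at least $\alpha_0\binom{n}{m}$ of the $m$-tuples of $G$ are intersecting, where $\alpha_0=\alpha_0(p,q,m)>0$ depends only on $p$, $q$, and $m$. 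Applying the statement above with this $\alpha_0$ gives a $\beta_0=\beta_0(\alpha_0,r)>0$ with FH$(m,\alpha_0,\beta_0)$ for $F^\cap$. Now \cite[Theorem 9]{akmm} (or the $(p,q)$ theorem of \cite{akmm} in the form stated there) applied to $F$ with $k=m$ yields $\tau(F)\leq c$ for a constant $c=c(p,q)$ depending only on $p$ and $q$ (through $m$, $\alpha_0$, $\beta_0$, all of which are ultimately functions of $p$, $q$, and $r$, with $r$ fixed). This completes the proof.
\end{proof}
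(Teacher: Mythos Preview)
Your proposal is correct and is precisely the deduction the paper has in mind: the paper does not give a proof but merely states that the result ``follows immediately from the results in \cite{akmm}'' and leaves the details to the reader. Your write-up supplies exactly those details --- $F^\cap\subset\mathcal{C}$, hence Theorem~\ref{gen-frac-helly} yields FH$(m,\alpha,\beta)$ for $F^\cap$, and then the abstract $(p,q)$ machinery of \cite{akmm} gives the bound on $\tau(F)$ --- so there is nothing to add.
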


\end{document}